\theoremstyle{plain}
\numberwithin{equation}{section}
\newtheorem{Theorem}{Theorem}[section]
\theoremstyle{remark}
\hfill \footnotesize {\rm B. S. Mortazavi-Samarin, M. Rostami} \hfill
\hfill \footnotesize {\rm Non-reflexivity of the Banach space $\Lambda{\rm B}V^{(p)}$} \hfill$~$}
\begin{document}
\begin{center}
{\large\bf Non-reflexivity of the Banach space $\Lambda{\rm B}V^{(p)}$}
\vskip.60in

{Batoul S. Mortazavi-Samarin$^{1}$, Mehdi Rostami$^{1*}$}\footnotetext{Corresponding author} \\[2mm]

{\footnotesize \textit{$^{1}$Department of Mathematics and Computer Science, Amirkabir University of Technology\\ (Tehran
Polytechnic), Iran}\\
}
\end{center}
\vskip 5mm
\noindent{\bf Abstract.}
In this paper, we present a surprising finding that demonstrates the non-reflexivity of the Waterman-Shiba space.
Previously, Prus-Wi$\rm\acute{s}$niowski and Ruckle
\cite{1} extended the classical result showing that the space of functions of bounded variation is non-reflexive.
Our work offers a refinement and strengthening of this earlier result.
 \vskip.15in
\footnotetext { \textbf{2020 Mathematics Subject Classification}: 26A45, 26B30, 26A15}
\footnotetext { \textbf{Keywords}: Waterman sequence, Waterman-Shiba class, bounded variation, reflexivity}
\footnotetext{\textbf{E-mail:}{
bmortazavi@aut.ac.ir (B. S. Mortazavi-Samarin); mross@aut.ac.ir (M. Rostami)}}
\section{Introduction and Preliminaries}
The notion of $\Lambda$-variation was introduced by Waterman \cite{Waterman} and has become an important tool in analysis,
especially in the study of Fourier series,
where it provides a natural extension of the classical Dirichlet--Jordan criterion. A sequence $\Lambda = (\lambda_i)$ of
positive numbers is called a Waterman sequence
if it is nondecreasing and satisfies the divergence condition $\sum_{i=1}^{\infty} \frac{1}{\lambda_i} = \infty$.
For a real-valued function $f$ on $[0,1]$, the $\Lambda$-variation
$V_\Lambda(f)$ is defined as the supremum of sums of normalized increments of $f$ taken over suitable families of intervals.
The collection of all functions with finite
$\Lambda$-variation forms the space $\Lambda\mathrm{BV}$, which becomes a Banach space when equipped with the norm
$\|f\| = |f(0)| + V_\Lambda(f)$. In the special case
$\Lambda = (1)$, one recovers the classical space of functions of bounded variation, $\mathrm{BV}$.

The Waterman--Shiba class $\Lambda\mathrm{BV}^{(p)}$, defined for $1 \le p < \infty$, extends the Waterman class
by incorporating a $p$-norm into the definition of variation,
much like the way the spaces $\ell^p$ generalize $\ell^1$. This analogy suggests a close connection between
$\Lambda\mathrm{BV}^{(p)}$ and the sequence spaces $\ell^p$.
However, although the classical $\ell^p$ spaces are reflexive for $1 < p < \infty$, the reflexivity of
$\Lambda\mathrm{BV}^{(p)}$ is not immediately apparent due to the more delicate structure imposed by the $\Lambda$-variation.

Previously, Prus-Wi\'{s}niowski and Ruckle \cite{1} established that $\Lambda\rm BV$ is non-reflexive, drawing on the
analogy between $\rm BV$ and $\ell^1$ to extend the
classical non-reflexivity result for bounded variation functions. In this paper, we extend these ideas to the
Waterman-Shiba class $\Lambda {\rm B}V^{(p)}$ and obtain a
surprising result: despite the analogy with the reflexive $\ell^p$ spaces, $\Lambda{\rm B}V^{(p)}$ is non-reflexive.
This result highlights a striking difference between the
behavior of sequence spaces and their function space analogues under $\Lambda$-variation,
and it refines previous results by providing a deeper understanding of the structure of these generalized bounded variation spaces.

For a Waterman sequence $\Lambda = (\lambda_i)$, we stand
$\Lambda(r)$ for $ \sum_{i=1}^{[r]}\frac{1}{\lambda_i}$, where
$r\geq1$. For $I= [a,b]\subseteq[0,1]$ and $p\in[1,\infty)$, we
denote $$f(I) := f(b) - f(a),\qquad V_{\Lambda,p}(f) :=
\underset{n}{\sup} \Big(\sum_{i=1}^n
\frac{|f(I_i)|^p}{\lambda_i}\Big)^{1/p},$$ where the supremum is
taken over all finite families  $\{I_i\}_{i=1}^\infty$ of
non-overlapping closed intervals with endpoint in $[0,1]$. Let $f$
be a real valued function on $[0,1]$ (in general, all discussions
for real valued functions on an arbitrary closed interval remain
valid). If $V_{\Lambda,p}(f) < \infty$, $f$ is called of bounded
$p$-$\Lambda$-variation. The linear space of all functions of
bounded $p$-$\Lambda$-variation is called the Waterman-Shiba space,
denoted by $\Lambda{\rm B}V^{(p)}$. This class was introduced by Shiba in 1980 \cite{2}. $\Lambda{\rm B}V^{(p)}$ is a Banach space
under the following norm $$\|f\|_{\Lambda,p} = |f(0)|+
V_{\Lambda,p}(f).$$ In the case $p=1$, $\Lambda\rm BV^{(p)}$
coincides with the Waterman space.
\section{Main Result}
The proof technique of \cite{1} motivated us to extend that result. Through, $\Lambda$
is a  Waterman sequence and since $\Lambda(r)$ tends to $\infty$ when $ r\rightarrow \infty$, we may, without loss of generality, assume that $ \lambda_1 =1$.

\begin{Theorem}
Let  $p\in (1,\infty)$ and $\Lambda$ be a Waterman sequence. Then the Waterman-Shiba space $\Lambda{\rm B}V^{(p)}$ is non-reflexive.
\end{Theorem}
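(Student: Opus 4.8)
The plan is to show that the closed unit ball $B$ of $\Lambda\mathrm{BV}^{(p)}$ is not weakly sequentially compact; since $\Lambda\mathrm{BV}^{(p)}$ is reflexive if and only if $B$ is weakly compact (Kakutani) if and only if $B$ is weakly sequentially compact (Eberlein--\v{S}mulian), this yields non-reflexivity. Concretely, I shall exhibit a sequence in $B$ having no weakly convergent subsequence, the obstruction being detected by a single, very explicit dual functional.

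Two facts about $(\Lambda\mathrm{BV}^{(p)})^{*}$ are needed. First, each point evaluation $\mathrm{ev}_t(f)=f(t)$, $t\in[0,1]$, is a bounded functional: applying the definition of $V_{\Lambda,p}$ to the one-element family $\{[0,t]\}$ and using $\lambda_1=1$ gives $|f(t)-f(0)|\le V_{\Lambda,p}(f)$, hence $|f(t)|\le\|f\|_{\Lambda,p}$. Second --- and this is the one place where the divergence condition $\sum_i 1/\lambda_i=\infty$ of a Waterman sequence is essential --- every $f\in\Lambda\mathrm{BV}^{(p)}$ has a left-hand limit at $1$. Indeed, if $\lim_{t\to1^-}f(t)$ did not exist one could interleave two sequences tending to $1$ from the left to produce non-overlapping closed intervals $I_1,I_2,\dots$ with $|f(I_j)|\ge\delta$ for some $\delta>0$ and all $j$, and then $\big(\sum_{j=1}^{n}|f(I_j)|^{p}/\lambda_j\big)^{1/p}\ge\delta\,\Lambda(n)^{1/p}\to\infty$, contradicting $V_{\Lambda,p}(f)<\infty$. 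Hence $\varphi(f):=\lim_{t\to1^-}f(t)$ is well defined and linear, and $|\varphi(f)|\le\sup_{t\in[0,1)}|f(t)|\le\|f\|_{\Lambda,p}$, so $\varphi\in(\Lambda\mathrm{BV}^{(p)})^{*}$.

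I would then take $u_n:=\chi_{[1-1/n,\,1]}$. Each $u_n$ has a single jump, so $V_{\Lambda,p}(u_n)=1$, $u_n(0)=0$, and $\|u_n\|_{\Lambda,p}=1$; thus $(u_n)\subset B$. Pointwise $u_n\to\chi_{\{1\}}$ on $[0,1]$, whereas $\varphi(u_n)=1$ for every $n$ while $\varphi(\chi_{\{1\}})=0$. If a subsequence $u_{n_j}$ converged weakly to some $h\in\Lambda\mathrm{BV}^{(p)}$, then testing against every $\mathrm{ev}_t$ would force $h(t)=\lim_j u_{n_j}(t)=\chi_{\{1\}}(t)$ for all $t$, i.e.\ $h=\chi_{\{1\}}$, and testing against $\varphi$ would then give $1=\lim_j\varphi(u_{n_j})=\varphi(h)=0$, which is absurd. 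So $(u_n)$ has no weakly convergent subsequence, $B$ is not weakly compact, and $\Lambda\mathrm{BV}^{(p)}$ is not reflexive. The only step in this plan that is more than bookkeeping is the existence of one-sided limits for functions of bounded $p$-$\Lambda$-variation, and it rests squarely on $\sum_i 1/\lambda_i=\infty$; I note in passing that the argument never uses $p>1$, so it simultaneously reproves the Prus-Wi\'{s}niowski--Ruckle theorem for $\Lambda\mathrm{BV}$ and extends it to the whole range $1\le p<\infty$.
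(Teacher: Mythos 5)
Your proof is correct, but it follows a genuinely different and more elementary route than the paper. The paper constructs an auxiliary function $h\in\Lambda\mathrm{BV}^{(q)}$ supported on blocks accumulating at $0$, pairs it with increments of $f$ via H\"older's inequality to build a family of bounded functionals $L_{(n_k)}$, and then feeds in a bounded sequence of tent functions $f_l$ whose values $L_{(n_k)}(f_{n_s})$ alternate with $|L_{(n_k)}(f_{n_s})|>1$, so no subsequence converges weakly; this is a direct adaptation of the Prus-Wi\'sniowski--Ruckle technique and exhibits the $\ell^p$--$\ell^q$ duality inside the variation. You instead use only two kinds of dual elements: point evaluations (bounded since $|f(t)-f(0)|\le\lambda_1^{1/p}V_{\Lambda,p}(f)$) and the functional $\varphi(f)=\lim_{t\to 1^-}f(t)$, whose existence is exactly the statement that $p$-$\Lambda$-bounded variation functions are regulated --- your interleaving argument, $\bigl(\sum_{j=1}^{n}|f(I_j)|^{p}/\lambda_j\bigr)^{1/p}\ge\delta\,\Lambda(n)^{1/p}\to\infty$, is where the Waterman divergence condition enters, and it is sound because every increment in the family is at least $\delta$ regardless of how the $\lambda_j$ are matched to the intervals. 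The sequence $u_n=\chi_{[1-1/n,1]}$ lies in the unit ball, converges pointwise to $\chi_{\{1\}}$, yet $\varphi(u_n)=1$, so any weak subsequential limit would have to be $\chi_{\{1\}}$ by the point evaluations while satisfying $\varphi=1$, a contradiction; non-reflexivity then follows from the same bounded-sequence criterion (Kakutani plus Eberlein--\v{S}mulian, i.e.\ the Megginson corollary the paper cites). What your approach buys is brevity, the avoidance of the combinatorial interval construction and of the dual function $h$, and uniformity in $p$: as you note, nothing uses $p>1$, so the same argument covers $\Lambda\mathrm{BV}$ itself. What the paper's approach buys is an explicit norming-type functional built from the conjugate space $\Lambda\mathrm{BV}^{(q)}$, which carries more structural information about the duality, whereas your obstruction is the (known, but here self-contained) fact that these spaces embed in the regulated functions. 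One small presentational caveat: when you say the failure of $\lim_{t\to1^-}f(t)$ lets you ``interleave two sequences,'' you are implicitly using that $f$ is bounded (which you have already established via the point evaluations) or, equivalently, the failure of the Cauchy criterion; stating it that way makes the step airtight.
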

\begin{proof}
Let  $q\in(1,\infty)$ be chosen such that $ 1/p+1/q=1$. We now construct a family of functions which is practical in the embedding theory, see \cite{-1} and \cite{3}.
Let  $b_{n,j}:=2^{-n}+\frac{2j-2}{2^{4n}}$, $c_{n,j}:=2^{-n}+\frac{2j-1}{2^{4n}} $ and $M_n = 2^{3n-1}$. Let $h_1$ be the zero function on $[0,1]$ and
for each $n=2, 3,\dots$, we define the functions $h_n$  by
\begin{equation*}
\begin{split}
h_n(y) := \left\{
\begin{array}{rl}
2^{-n}{\big(\Lambda(M_n) \big)}^{\frac{-1}{q}}\qquad& \qquad y \in [b_{n,j},c_{n,j});\ 1\leq j \leq \frac{M_n-2}{2} , \\
\\
0\qquad \qquad\qquad\ & \qquad {\rm otherwise}.
\end{array} \right.
\end{split}
\end{equation*}
Since the supports of the functions $h_n$ are disjoint, the function $h(x) :=  \sum_{n=1}^{\infty}h_n(x)$ is well-defined. Furthermore, we estimate
\begin{equation*}
\begin{split}
V_{\Lambda,q}(h) \leq \sum_{n=1}^\infty V_{\Lambda,q}(h_n)
&\leq \sum_{n=1}^\infty \Big( \sum_{j=1}^{2\frac{M_n-2}{2}} \frac{\big(2^{-n}{\big(\Lambda(M_n) \big)}^{\frac{-1}{q}}\big)^q}{\lambda_j}\Big)^{1/q}\\
&\leq \sum_{n=1}^\infty \Big( \sum_{j=1}^{M_n} \frac{\big(2^{-n}{\big(\Lambda(M_n) \big)}^{\frac{-1}{q}}\big)^q}{\lambda_j}\Big)^{1/q}
\\&\leq  \sum_{n=1}^\infty 2^{-n} \Big( \frac{  \Lambda(M_n) }{\Lambda(M_n)}\Big)^{1/q}
= 1.
\end{split}
\end{equation*}
 Next, for any fixed $n> 1$, a direct calculation shows that
\begin{equation*}
\begin{split}
c_{n+1,\frac{M_{n+1}-2}{2}}&=2^{-n-1}+\frac{2\times \frac{M_{n+1}-2}{2}-1}{2^{4n+4}} = \frac{2^{3n+3}+ 2^{3n+2}-3}{2^{4n+4}}\leq 2^{-n}= b_{n,1}\leq  b_{n,\frac{M_{n}-2}{2}}.
\end{split}
\end{equation*}
This produces a strictly decreasing sequence
$$ \cdots\lvertneqq b_{4,\frac{M_4-2}{2}}\lvertneqq   c_{4,\frac{M_4-2}{2}}\lvertneqq   b_{3,\frac{M_3-2}{2}}\lvertneqq  c_{3,\frac{M_3-2}{2}}\lvertneqq   b_{2,\frac{M_2-2}{2}}\lvertneqq   c_{2,\frac{M_2-2}{2}},$$
which converges to zero. We denote this sequence by $(r_i)_{i=0}^\infty$. Equivalently the sequence, $(r_i)_{i=0}^\infty$ may be described as follows:
\begin{equation*}
\begin{split}
r_i:=\left\{
\begin{array}{rl}
c_{{k_i}+2,\frac{M_{k_i+2}-2}{2}}\qquad\qquad& \qquad {\rm{for~ an~ even}} ~i~ {\rm{where}} ~i=2{k_i}, \\
\\
b_{{k_i}+2,\frac{M_{k_i+2}-2}{2}}\qquad \qquad & \qquad{\rm{for~ an~ odd}} ~i~ {\rm{where}}~ i=2{k_i}+1,
\end{array} \right.
\end{split}
\end{equation*}
for $i=0,1,2,\dots$. Define the intervals $J_i = [r_{i},r_{i-1}]$.
This yields a family of pairwise disjoint intervals $\mathcal{J} = \{J_i\}_{i=1}^\infty$.
Now consider the interval $J^\prime := [\frac{3}{8},\frac{3}{8}+\frac{1}{2^8})$. One can verify that
$J^\prime\subset [0,1]\backslash \bigcup^\infty_{i=1} J_i$.  Indeed,
$$\frac{3}{8} = \frac{1}{4} + \frac{2^5}{2^8} > \frac{1}{4} + \frac{2^5-3}{2^8} = c_{2,\frac{M_{2}-2}{2}}.$$
For convenience, define $N_k:=2k$ for all integers $k\geq0$.
For $f\in \Lambda {\rm B}V^{(p)}$ and a given increasing sequence $(n_k) $ of positive integers, we define the functional
$$L_{(n_k)}(f) = f(J^\prime)+ \sum_{j=1}^\infty p_{(n_k)}(j) \sum_{i=1}^{N_{j} - N_{{j}-1}}{(-1)}^i \frac{ f(J_{N_{j-1}+i})|h(J_{N_{j-1}+i})|}{\lambda_{ N_{{j}-1}+i}}, $$
where
\begin{equation*}
\begin{split}
p_{(n_k)}(j) = \left\{
\begin{array}{rl}
0\qquad \qquad\qquad &{\rm{if~}}j\neq n_k ~{\rm{for~all}}~ k,\\
1\qquad \qquad\qquad&{\rm{if~}}j= n_k ~{\rm{for~an ~odd}}~ k,\\
-1\qquad \qquad\qquad&{\rm{if~}}j= n_k~ {\rm{for~an~even}}~ k.
\end{array} \right.
\end{split}
\end{equation*}
It is noteworthy that
\begin{equation*}
\begin{split}
\sum_{i=1}^{N_{j} - N_{{j}-1}}\Big|{(-1)}^i \frac{ f(J_{N_{j-1}+i})|h(J_{N_{j-1}+i})|}{\lambda_{ N_{{j}-1}+i}}{\Big|}
&= \sum_{i=1}^{2}\Big| \frac{ f(J_{N_{j-1}+i})|h(J_{N_{j-1}+i})|}{\lambda_{ N_{{j}-1}+i}}{\Big|}
\\&=\Big| \frac{ f(J_{N_{j-1}+1})|h(J_{N_{j-1}+1})|}{\lambda_{ N_{{j}-1}+1}}{\Big|} + \Big| \frac{ f(J_{N_{j-1}+2})|h(J_{N_{j-1}+2})|}{\lambda_{ N_{{j}-1}+2}}{\Big|}
\\&=\Big| \frac{ f(J_{N_{j-1}+1})|h(J_{N_{j-1}+1})|}{\lambda_{ N_{{j}-1}+1}}{\Big|} + \Big| \frac{ f(J_{N_{j}})|h(J_{N_{j}})|}{\lambda_{ N_{{j}}}}{\Big|}.
\end{split}
\end{equation*}
So
\begin{equation*}
\begin{split}
&\Big|\sum_{j=1}^\infty p_{(n_k)}(j)    \sum_{i=1}^{N_{j} - N_{{j}-1}}{(-1)}^i \frac{ f(J_{N_{j-1}+i})|h(J_{N_{j-1}+i})|}{\lambda_{ N_{{j}-1}+i}}\Big|
\\&\leq
\sum_{j=1}^\infty {\Big|} p_{(n_k)}(j)  \sum_{i=1}^{N_{j} - N_{{j}-1}}{(-1)}^i \frac{ f(J_{N_{j-1}+i})|h(J_{N_{j-1}+i})|}{\lambda_{ N_{{j}-1}+i}}{\Big|}
\\&\leq \sum_{j=1}^\infty   \sum_{i=1}^{N_{j} - N_{{j}-1}} {\Big|}\frac{ f(J_{N_{j-1}+i})|h(J_{N_{j-1}+i})|}{\lambda_{ N_{{j}-1}+i}}{\Big|}
\\&=
\sum_{i=1}^{N_{1} - N_{{0}}} {\Big|}\frac{f(J_{N_{0}+i})|h(J_{N_{0}+i})|}{\lambda_{ N_{{0}}+i}}{\Big|} +\sum_{i=1}^{N_{2} - N_{{1}}} {\Big|}\frac{ f(J_{N_{1}+i})|h(J_{N_{1}+i})|}{\lambda_{ N_{{1}}+i}}{\Big|} +\cdots
\\&=
{\Big|}\frac{f(J_{N_{0}+1})|h(J_{N_{0}+1})|}{\lambda_{ N_{{0}}+1}}{\Big|}+{\Big|}\frac{f(J_{N_{1}})|h(J_{N_{1}})|}{\lambda_{ N_{{1}}}}{\Big|}
+
{\Big|}\frac{f(J_{N_{1}+1})|h(J_{N_{1}+1})|}{\lambda_{ N_{{1}}+1}}{\Big|}+
{\Big|}\frac{f(J_{N_{2}})|h(J_{N_{2}})|}{\lambda_{ N_{{2}}}}{\Big|}+\cdots
\\&
\leq \sum_{i=1}^\infty{\Big|}\frac{ f(J_{i})|h(J_{i})|}{\lambda_{i}}{\Big|}.
\end{split}
\end{equation*}
Using H$\rm\ddot{o}$lder's inequality with conjugate exponents $p$ and
$q$, we obtain
\begin{equation*}
\begin{split}
&\sum_{j=1}^\infty p_{(n_k)}(j) \sum_{i=1}^{N_{j} - N_{{j}-1}}{(-1)}^i \frac{ f(J_{N_{j-1}+i})|h(J_{N_{j-1}+i})|}{\lambda_{ N_{{j}-1}+i}}\leq
\sum_{i=1}^\infty{\Big|}\frac{ f(J_{i})|h(J_{i})|}{\lambda_{i}}{\Big|}
\\
&\leq {\left(\sum_{i=1}^{\infty} {\left(\frac{ | f({J_{i}}) |  }{{\lambda_{i}}^{1/p}}\right)}^{p}\right)}^{1/p}
{\left(\sum_{i=1}^{\infty} {\left(\frac{ | h(J_{i}) |  }{{\lambda_{i}}^{1/q}}\right)}^{q}\right)}^{1/q}\\
&\leq  V_{\Lambda,p}(f)V_{\Lambda,q}(h).
\end{split}
\end{equation*}
This shows that the series $$\sum_{j=1}^\infty p_{(n_k)}(j) \sum_{i=1}^{N_{j} - N_{{j}-1}}{(-1)}^i \frac{ f(J_{N_{j-1}+i})|h(J_{N_{j-1}+i})|}{\lambda_{ N_{{j}-1}+i}}$$
is absolutely convergent and particularly convergent for every function $f\in \Lambda {\rm B}V^{(p)}$. Consequently, the functional $L_{(n_k)}$ is well-defined and we may freely reorder the terms in this series. We have
\begin{equation*}
\begin{split}
L_{(n_k)}(\alpha f
+ g ) &= (\alpha f
+ g ) (J^\prime)\\& + \sum_{j=1}^\infty p_{n_k}(j)  \sum_{i=1}^{N_{n_k} - N_{n_{k}-1}}{(-1)}^i \frac{ (\alpha f
+ g )(J_{N_{n_{k}-1}+i})|h(J_{N_{j-1}+i})|}{\lambda_{N_{n_{k}-1}+i}}\\
&= (\alpha f
+ g ) (b)  - (\alpha f
+ g ) (a)\\& +\sum_{j=1}^\infty p_{n_k}(j)  \sum_{i=1}^{N_{n_k} - N_{n_{k}-1}}{(-1)}^i \frac{ \alpha f (J_{N_{n_{k}-1}+i})|h(J_{N_{j-1}+i})| + g(J_{N_{n_{k}-1}+i})|h(J_{N_{j-1}+i})|}{\lambda_{i}}
\\&=\alpha f
(b)  - \alpha f
(a) +\sum_{j=1}^\infty p_{n_k}(j)   \sum_{i=1}^{N_{n_k} - N_{n_{k}-1}}{(-1)}^i \frac{ \alpha f(J_{N_{n_{k}-1}+i})|h(J_{N_{j-1}+i})|}{\lambda_{i}}
\\&+ g (b)  - g  (a) +\sum_{j=1}^\infty p_{n_k}(j)  \sum_{i=1}^{N_{n_k} - N_{n_{k}-1}}{(-1)}^i \frac{  g(J_{N_{n_{k}-1}+i})|h(J_{N_{j-1}+i})|}{\lambda_{i}}\\
&= \alpha L_{(n_k)}( f ) + L_{(n_k)}(g ),
\end{split}
\end{equation*}
where $a=\frac{3}{8}$ and $b=\frac{3}{8}+\frac{1}{2^8}$.
On the other hand, take any finite family of intervals $\{I_i\}_{i=1}^n$,
with $I_1:= J^\prime$ and $I_2,\dots, I_n$ are some non-overlapping intervals contained in $[0,1]\backslash J^\prime$. For $f\in\Lambda{\rm B}V^{(p)}$ with $\|f\|_{\Lambda,p}\leq1$, we have $\dfrac{|f(J^{\prime})|^p}{\lambda_1}=|f(J^{\prime})|^p\leq1$ and so $|f(J^{\prime})|\leq1$.
Therefore,
$$|L_{(n_k)}(f)|\leq |f(J^{\prime})| + V_{\Lambda,p}(f)V_{\Lambda,q}(h) \leq 1+\|f\|_{\Lambda,p}\|h\|_{\Lambda,q} .$$
Since $h\in \Lambda {\rm B}V^{(q)}$, $V_{\Lambda,q}(h)\leq1$, and $h(0)=0$, it follows that
$$\|L_{(n_k)}\|=\sup_{\|f\|_{\Lambda,p}\leq1}|L_{(n_k)}(f)|\leq \|h\|_{\Lambda,q}=|h(0)|+V_{\Lambda,q}(h)\leq1.$$
Hence, $L_{(n_k)}$ is a bounded linear functional on $\Lambda{\rm
B}V^{(p)}$ with operator norm at most $1$. For each fixed $l$, we define a function $f_l$ which vanishes outside $(r_{{N_{l}}},r_{{N_{l-1}}})\bigcup J^\prime$. On
$J^{\prime}$ the function takes the constant value $p_{(n_k)}(l)$ on
$J^\prime$. On $(r_{{N_{l}}},r_{{N_{l-1}}+1})$ and
$(r_{{N_{l-1}+1}},r_{{N_{l-1}}})$, it is defined linearly so that
$f_l(r_{N_{l-1}+1}):=1.$ For instance, in the case where $l\neq n_k$
for all $k$, the graph of $f_l$  has the shape illustrated as
follows:
\begin{figure}[!ht]
\centering
\begin{tikzpicture}
\def\l{1.2}
\coordinate (o) at (0,0);
\coordinate (b) at (3-\l,0);
\coordinate (A) at (3,{\l*sqrt(3)});
\coordinate (a) at (3+\l,0);
 \draw[<->] (0,{\l*sqrt(3)+.5}) node[above] {$y$}|- (6,0) node[right] {$x$} node[midway, below left] {$0$};
\draw (b) node[below] {\strut $r_{N_l}$} --(A) node[above]{${}$} --(a)node[below] {\strut $r_{N_{l-1}}$};
\draw[densely dashed] (o|-A) node[left]{$1.0$} -| (o-|A) node[below]{\strut $r_{N_{l-1}+1}$};
\end{tikzpicture}
\end{figure}
\par
Since $\lambda_1=1$ and the sequence $(\lambda_i)$ is increasing, we
obtain
\begin{equation*}
\begin{split}
\| f_l \|_{\Lambda,p}&\leq {{\left( \sum_{k=1}^3\frac{|f(I^\prime_k)|^p}{\lambda_k} \right)}}^{1/p}\\&
\leq {{\left( \sum_{k=1}^3\frac{|2|^p}{\lambda_k} \right)}}^{1/p}
\\& ={{\left( \sum_{k=1}^3\frac{2\times{\lambda_1}}{\lambda_k} \right)}}^{1/p}\\&
\leq  {{\left( \sum_{k=1}^3 \frac{2\times{\lambda_k}}{\lambda_k} \right)}}^{1/p}\\& = 6^{1/p}.
\end{split}
\end{equation*}
Thus, $\{f_l\}$
is a bounded sequence in $\Lambda{\rm B}V^{(p)}$ with uniform bound $6^{1/p}$.

Now, consider an arbitrary subsequence
$ (f_{n_s}) $ of the  sequence $(f_{l})$. Since $f_l=0$ outside the interval $(r_{{N_{l}}},r_{{N_{l-1}}})$, we have $ f_{l}(J_{N_j}) =0 $ whenever $l\neq j$.
Therefore, for every natural number $s$, we have
\begin{equation*}
\begin{split}
&L_{(n_k)}(f_{n_s})
= f_{n_s}(J^\prime) + \sum_{j=1}^\infty p_{(n_k)}(j)\sum_{i=1}^{N_{j} - N_{j-1}}~ {(-1)}^i \frac{ f_{n_s}(J_{N_{j-1}+i}) |h(J_{N_{j-1}+i})|}{\lambda_{N_{j-1}+i}}
\\&= f_{n_s}(J^\prime) +  p_{(n_k)}(n_s)\sum_{i=1}^{N_{n_s} - N_{n_s-1}}~ {(-1)}^i \frac{ f_{n_s}(J_{N_{n_s-1}+i}) |h(J_{N_{{n_s}-1}+i})|}{\lambda_{N_{n_s-1}+i}}
\\&+\sum_{\underset{j\neq n_s}{j=1}}^\infty p_{(n_k)}(j)\sum_{i=1}^{N_{j} - N_{j-1}}~ {(-1)}^i \frac{ f_{n_s}(J_{N_{j-1}+i}) |h(J_{N_{j-1}+i})|}{\lambda_{N_{j-1}+i}}
\\&= f_{n_s}(J^\prime) + p_{(n_k)}(n_s)\sum_{i=1}^{N_{n_s} - N_{n_{s}-1}}~ {(-1)}^i \frac{ f_{n_s}(J_{N_{n_{s}-1}+i})|h(J_{N_{{n_s}-1}+i})|}{\lambda_{N_{n_{s}-1}+i}}
\\& =  p_{(n_k)}(n_s) + p_{(n_k)}(n_s)\sum_{i=1}^{N_{n_s} - N_{n_{s}-1}}~ {(-1)}^i \frac{ f_{n_s}(J_{N_{n_{s}-1}+i})|h(J_{N_{{n_s}-1}+i})|}{\lambda_{N_{n_{s}-1}+i}}
\\&=  p_{(n_k)}(n_s) {\Big(}1 + \sum_{i=1}^{N_{n_s} - N_{n_{s}-1}}~  \frac{ |h(J_{N_{{n_s}-1}+i})|}{\lambda_{N_{n_{s}-1}+i}}{\Big)}
\\&= (-1)^{s+1} {\Big(}1 + \sum_{i=1}^{N_{n_s} - N_{n_{s}-1}}~  \frac{ |h(J_{N_{{n_s}-1}+i})|}{\lambda_{N_{n_{s}-1}+i}}{\Big)}.
\end{split}
\end{equation*}
Since
\begin{equation*}
\begin{split}
&\sum_{i=1}^{N_{n_s} - N_{n_{s}-1}}~ {(-1)}^i \frac{ f_{n_s}(J_{N_{n_{s}-1}+i})|h(J_{N_{{n_s}-1}+i})|}{\lambda_{N_{n_{s}-1}+i}}
\\&= - \frac{ f_{n_s}(J_{N_{n_{s}-1}+1})|h(J_{N_{{n_s}-1}+1})|}{\lambda_{N_{n_{s}-1}+1}} +  \frac{ f_{n_s}(J_{N_{n_{s}-1}+2})|h(J_{N_{{n_s}-1}+2})|}{\lambda_{N_{n_{s}-1}+2}}
\\&=- \frac{ \big(f_{n_s}(r_{N_{n_{s}-1}+1-1}) - f_{n_s}(r_{N_{n_{s}-1}+1})\big)|h(J_{N_{{n_s}-1}+1})|}{\lambda_{N_{n_{s}-1}+1}}
\\& +  \frac{ \big(f_{n_s}(r_{N_{n_{s}-1}+2-1}) - f_{n_s}(r_{N_{n_{s}-1}+2})\big)|h(J_{N_{{n_s}-1}+2})|}{\lambda_{N_{n_{s}-1}+2}}
\\&=- \frac{ \big(f_{n_s}(r_{N_{n_{s}-1}}) - f_{n_s}(r_{N_{n_{s}-1}+1})\big)|h(J_{N_{{n_s}-1}+1})|}{\lambda_{N_{n_{s}-1}+1}}
\\& +  \frac{ \big(f_{n_s}(r_{N_{n_{s}-1}+1}) - f_{n_s}(r_{N_{n_{s}-1}+2})\big)|h(J_{N_{{n_s}-1}+2})|}{\lambda_{N_{n_{s}-1}+2}}
\\&- \frac{ \big(0 - 1\big)|h(J_{N_{{n_s}-1}+1})|}{\lambda_{N_{n_{s}-1}+1}}
+  \frac{ \big(1 - 0)\big)|h(J_{N_{{n_s}-1}+2})|}{\lambda_{N_{n_{s}-1}+2}}
\\& =\sum_{i=1}^{N_{n_s} - N_{n_{s}-1}}~  \frac{ |h(J_{N_{{n_s}-1}+i})|}{\lambda_{N_{n_{s}-1}+i}},
\end{split}
\end{equation*}
then,
    $$L_{(n_k)}(f_{n_s})~   > 1   \qquad\qquad{\rm{for~ an ~odd}}\ \ s,
    $$ and
    $$L_{(n_k)}(f_{n_s})~  < -1\qquad\quad\ {\rm{for~an~even}}\ \ s. $$
    This implies that the sequence $\{f_{n_s}\}$ does not converge weakly. By \cite[Corollary 2.8.9]{0},
    this implies that the Waterman-Shiba space $\Lambda{\rm B}V^{(p)}$ is not reflexive.
\end{proof}
The non-reflexivity of $\Lambda{\rm B}V^{(p)}$ has important
implications in Fourier analysis and related areas. Reflexivity is
closely tied to weak compactness properties, duality, and the
structure of linear operators, all of which play a key role in the
convergence of Fourier series and the analysis of function
approximations. Understanding the reflexivity, or lack thereof, of
function spaces like $\Lambda{\rm B}V^{(p)}$ helps clarify the
limitations of certain analytical methods, informs the design of
approximation schemes, and provides insight into the behavior of
operators on these spaces. Consequently, our result not only
advances the theoretical understanding of generalized bounded
variation spaces but also offers a foundation for future
applications in harmonic analysis and functional analysis.
\section {Conclusion}

\noindent In this paper, we establish an unexpected result
concerning the structure of the Waterman--Shiba space, namely, its
non-reflexivity. Earlier, Prus-Wi$\rm\acute{s}$niowski and Ruckle
\cite{1} extended the classical theorem asserting the
non-reflexivity of the space of functions of bounded variation. The
result obtained here sharpens and completes their work by showing
that non-reflexivity persists even within the broader framework of
the Waterman--Shiba classes.

\noindent
{\bf Data Availability}\\
Data sharing is not applicable to this article as no new data were created or analyzed in this study.

\noindent
{\bf Conflicts of Interest}\\
The authors declare that there are no conflicts of interest
regarding the publication of this paper.

\noindent
{\bf Authors' Contributions}\\
The authors contributed equally to this work.

\begin{small}

\end{small}
\end{document}